\documentclass[12pt,leqno]{article}
\usepackage[active]{srcltx}

\usepackage{amsfonts,latexsym,amsmath,amssymb,amsthm}
\usepackage{fullpage}
\newtheorem{theorem}{Theorem}[section]
\newtheorem{lemma}[theorem]{Lemma}

\newtheorem{proposition}[theorem]{Proposition}

\newtheorem{problem}[theorem]{Problem}
\theoremstyle{definition}
\newtheorem{definition}[theorem]{Definition}

\def\R{{\mathbb R}}
\def\N{{\mathbb N}}

\theoremstyle{remark}

\newtheorem*{note*}{Note}

\numberwithin{equation}{section}

\makeatletter
\newcommand{\rank}{\mathop{\operator@font rank}}
\newcommand{\conv}{\mathop{\operator@font conv}}
\newcommand{\vol}{\mathop{\operator@font vol}}
\newcommand{\onetagright}{\tagsleft@false}
\makeatother

\renewcommand{\epsilon}{\varepsilon}

\usepackage{color}

\def\note#1{\ifvmode\leavevmode\fi\vadjust{\vbox to0pt{\vss
 \hbox to 0pt{\hskip\hsize\hskip1em
\vbox{\hsize2.5cm\small\raggedright\pretolerance10000
 \noindent #1\hfill}\hss}\vbox to8pt{\vfil}\vss}}}

\usepackage{hyperref}

\begin{document}

\title{\bf On polynomially integrable convex bodies}

\medskip

\author{A.~Koldobsky, A.~Merkurjev, and V.~Yaskin}

\date{}

\maketitle

\begin{abstract} An infinitely smooth convex body in
  $\mathbb R^n$ is  called polynomially integrable of degree $N$ if its parallel section
  functions are polynomials of degree $N$. We prove that the only smooth
  convex bodies with this property  in odd dimensions are ellipsoids,
  if $N\ge n-1$.  This is in contrast with the case of even dimensions
  and the case of odd dimensions with $N<n-1$,
  where such bodies do not exist,  as it was recently shown by Agranovsky.
\end{abstract}

\section{Introduction}

Let $K$ be an  infinitely smooth  convex body in $\R^n.$ The parallel
section function of $K$ in the direction $\xi\in S^{n-1}$
is defined by
$$A_{K,\xi}(t)=\mathrm{vol}_{n-1}(K\cap \{(x,\xi)=t\})=\int_{(x,\xi)=t} \chi_K(x) dx,
\quad t\in \R,$$
where $\chi_K$ is the indicator function of $K$, and $(x,\xi)$ is the scalar product in $\R^n$.

It is clear that if $B$ is the Euclidean ball of radius $r$ centered at the origin,
then $$A_{B,\xi}(t)= c_n (r^2 - t^2)^{(n-1)/2},$$ for $|t|\le r$. In
particular, if $n$ is odd then   the
parallel section function of $B$ is a polynomial  for every $\xi\in
S^{n-1}$. This property also holds for ellipsoids.

\begin{definition}\label{pi} A convex body $K$ (or more generally, a bounded domain) in $\R^n$ is called {\it polynomially
    integrable} (of degree $N$) if
\begin{equation}\label{pol-int}
A_{K,\xi}(t)=\sum_{k=0}^N a_k(\xi)\ t^k
\end{equation}
for some integer $N$, all $\xi\in S^{n-1}$ and all $t$ for which
the set $K\cap \{x: (x,\xi)=t\}$ is non-empty. Here, $a_k$ are functions on the sphere. We assume that
the function $a_N$ is not identically zero.
\end{definition}

This concept was introduced by Agranovsky in \cite{A}. He also
established a number of properties of such bodies.
In particular, he showed that there are no bounded polynomially integrable domains with smooth boundaries in Euclidean spaces of even
dimensions. In odd dimensions he proved that
polynomially integrable bounded domains with smooth boundaries are convex, and that there are no polynomially integrable bounded domains in
$\mathbb R^n$
with smooth boundaries of degree strictly less than
$n-1$, while every such body with degree $n-1$ is an ellipsoid.
For polynomially integrable domains of higher degrees Agranovsky asks the following.

\begin{problem}\label{ellipsoid}  Is it true that in the odd-dimensional
  space  the only polynomially
  integrable domains
are ellipsoids?
\end{problem}

Problems of this kind go back
to Newton \cite{N}.  Consider the volume of the ``halves'' of
the body cut off by the hyperplane $(x,\xi)=t$, that is
$V^+_{K,\xi}(t) = \int_t^\infty A_{K,\xi}(z) dz$ and $V^-_{K,\xi}(t) =
\int_{-\infty}^t A_{K,\xi}(z) dz$. A body $K$ is called algebraically integrable if
there is a polynomial $F$ such that
$F(\xi_1,\dots,\xi_n,t,V^\pm_{K,\xi}(t))=0$ for every choice of parameters $\xi$ and $t$. Newton showed that in $\mathbb R^2$  there
are no algebraically integrable convex bodies with infinitely smooth
boundaries.  Arnold asked for extensions of Newton's result to other
dimensions and general domains; see problems 1987-14, 1988-13, and
1990-27 in \cite{Arnold}.
 Vassiliev  \cite{V} generalized Newton's
result by showing that there are
no algebraically integrable bounded domains with infinitely smooth boundary in
$\R^n$ for even $n$.

Although  Agranovsky's question stated above (Problem \ref{ellipsoid})
has a lot in common with these types of
problems,  yet it is different. While for each $\xi$
the parallel section function is assumed to be polynomial, there are
no conditions on the coefficients $a_k$. Thus, it is not
a particular case of Arnold's question.
\smallbreak
Here we consider the following  question.

\begin{problem} \label{deriv} Suppose that $K$ is an
  infinitely smooth convex body in $\R^n,$ containing the origin in its interior,
and suppose there exists $N\in \N$ such that for every even integer $m\ge N$ and every $\xi\in S^{n-1}$
we have
$$A_{K,\xi}^{(m)}(0)=0.$$
Is it true that the body $K$ is necessarily an
ellipsoid if $n$ is odd and  $N\ge n$, and that such
bodies do not exist in all other cases?
\end{problem}

In this paper we give an affirmative  answer to Problem \ref{deriv},
thus solving Problem \ref{ellipsoid}, since convexity follows from the condition (\ref{pol-int}), as it was shown in \cite[Theorem 5]{A}. Also the condition that $K$ contains the origin is not restrictive because
polynomial integrability is invariant with respect to shifts. Note that
the number $N$ in Problem \ref{deriv} differs by one from that in
Definition \ref{pi}.

\section{Preliminaries}

We say that $K$ is a {\it star body} in $\R^n$ if it is compact, star-shaped with respect to the origin, and
the {\it Minkowski functional} of $K$  defined by
\begin{align*}
\|x\|_K=\min \{a\ge 0: x \in aK \}, \qquad x\in \mathbb R^n.
\end{align*}
is a positive continuous function on $\mathbb R^n\setminus\{0\}.$

$K$ is said to be infinitely smooth if its Minkowski functional is a
$C^\infty$-function on $\mathbb R^n\setminus \{0\}$. $K$ is origin-symmetric if $K=-K$.

Most of the time we will assume that $K$ is a
convex body in $\R^n$, i.e. a compact convex set  with non-empty
interior.

In this paper we will be working with the Fourier transform of
distributions; we refer to \cite{K-book} and \cite{GYY} for details. Let $\mathcal{S}(\mathbb{R}^n)$  be the Schwartz space of infinitely differentiable
rapidly decreasing functions on $\mathbb R^n$, called test functions. The Fourier transform of $\phi\in\mathcal{S}(\mathbb{R}^n)$ is a test function $\widehat{\phi}$ defined by
\begin{align*}
 \widehat{\phi}(x) = \int_{\mathbb{R}^n} \phi(y) e^{-i(x,y)} \, dy, \qquad x\in\mathbb{R}^n .
\end{align*}
By $\mathcal S'(\mathbb R^n)$ we denote the space of continuous linear
functionals on $\mathcal S(\mathbb R^n)$. Elements of
$\mathcal{S}'(\mathbb{R}^n)$ are referred to as distributions. We
write $\langle f,\phi\rangle$ for the action of $f\in\mathcal{S}'(\mathbb{R}^n)$ on a test function $\phi$.

The Fourier transform of $f$ is a distribution $\widehat{f}$ defined by
\begin{align*}
\langle \widehat{f},\phi\rangle = \langle f, \widehat{\phi}\rangle , \qquad \forall \phi\in\mathcal{S}(\mathbb{R}^n).
\end{align*}
If a distribution $f$ is supported in the origin, i.e. $\langle f,\phi\rangle=0$ for any test function $\phi$
supported in $\mathbb{R}^n\setminus \{0\},$ then $f$ is a finite sum of derivatives of the delta function,
and its Fourier transform $\widehat{f}$ is a polynomial.

If $K\subset \R^n$ is a  convex body, containing the origin in its interior, and $p>-n$, then
$\|\cdot\|_K^p$ is a locally integrable function on $\R^n$ and
therefore can be thought of as a distribution that acts on test
functions by integration,
$$\langle \|\cdot\|_K^p, \phi\rangle  = \int_{\mathbb R^n} \|x\|_K^p
\phi(x)\, dx, \quad \forall \phi\in \mathcal{S}(\R^n).$$

If $K$ is an infinitely smooth  convex body, containing the origin in its interior, $p>-n$,
and $p\ne 0$, then the Fourier transform of $ \|\cdot\|_K^p$ is given
by a homogeneous of degree $-n-p$ continuous function on $\mathbb
R^n\setminus \{0\}$; see \cite[Lemma 3.16]{K-book}. We write $(\|x\|_K^p)^\wedge(\xi)$
meaning the value of this continuous function at the point $\xi\in S^{n-1}.$
 Moreover, this function can be
computed in terms of fractional derivatives of the parallel section
function of $K$.

Let  $h$ be an integrable
function on $\R$ that is $C^\infty$-smooth in a neighborhood of the
origin. The {\it fractional derivative}  of the function
$h$ of order $q\in \mathbb C$ at zero is defined by
$$h^{(q)}(0) = \langle \frac{t_+^{-1-q}}{\Gamma(-q)}, h(t)\rangle,$$
where $t_+=\max \{0,t\}$.

In particular, if $q$ is not an integer and $-1< \Re q <m$ for some
integer $m$, then

$$h^{(q)}(0)=\frac{1}{\Gamma(-q)} \int_0^1 t^{-1-q} \Big(h(t)-h(0)-\cdots -h^{(m-1)}(0) \frac{t^{m-1}}{(m-1)!}\Big) dt$$    $$ +\frac{1}{\Gamma(-q)}
\int_1^\infty t^{-1-q} h(t)
dt+\frac{1}{\Gamma(-q)}\sum_{k=0}^{m-1}\frac{h^{(k)}(0)}{k!(k-q)}.$$

If  $k\ge 0$ is an integer,  the fractional derivative of the order
$k$ is given as the limit of
the latter expression as $q\to k$. That is
$$h^{(k)}(0) = (-1)^k \frac{d^k}{dt^k} h(t)\Big|_{t=0},$$
i.e. fractional derivatives of integral orders coincide up to a sign with ordinary derivatives.
Note that  $  h^{(q)}(0)$ is an entire function of the variable $q\in
\mathbb{C}.$

If $K$ is an infinitely smooth  convex body, then  $A_{K,\xi}$ is infinitely smooth in a neighborhood of
$t=0$ which is uniform with respect to $\xi\in S^{n-1}$; see  \cite[Lemma
2.4]{K-book} (note that the proof there works without the symmetry assumption). In \cite{RY} (see the proof of Theorem 1.2 there) the following formula is proved:
\begin{eqnarray}\label{Aq-nonsymm} A_{K,\xi}^{(q)}(0) &=& {\cos (q\pi/2)\over {2\pi (n-1-q)  }} \left( \|x\|_K^{-n+1+q} + \|-x\|_K^{-n+1+q}\right)^\wedge(\xi) \\
 & &- {i\sin (q\pi/2)\over {2\pi(n-1-q)  }} \left( \|x\|_K^{-n+1+q} - \|-x\|_K^{-n+1+q} \right)^\wedge(\xi).\nonumber
\end{eqnarray}
In \cite{RY} this formula is stated only for $-1<q<n-1$, but a
standard analytic continuation argument yields a larger range of $q$, namely $q\in (-1,\infty)$, $q\ne n-1$.
In the symmetric case formula (\ref{Aq-nonsymm}) was established in \cite{GKS}, see also
\cite[Th 3.18]{K-book} and a different proof in \cite{BFM}.

In particular, for integers we get the usual derivatives. If $k \ge 0$ is an even integer, $k\ne n-1$, then
\begin{equation}\label{A^(k-even)}
A_{K,\,\xi}^{(k)}(0) =    \frac{(-1)^{k/2}}{2\pi(n-k-1)} \Big(\Vert x\Vert_K^{-n+1+k}+\Vert - x\Vert_K^{-n+1+k}
	 \Big)^\wedge (\xi),
\end{equation}
and if $k> 0$ is an odd integer, $k\ne n-1$, then
\begin{equation}\label{A^(k-odd)}
A_{K,\,\xi}^{(k)}(0) =    \frac{i(-1)^{(k-1)/2}}{2\pi(n-k-1)} \Big(\Vert x\Vert_K^{-n+1+k} - \Vert - x\Vert_K^{-n+1+k}
	 \Big)^\wedge (\xi),
\end{equation}

A formula for the derivative of the order $n-1$ can be obtained precisely as in
\cite{KKYY} (where this was done for origin-symmetric bodies) by taking the limit in (\ref{Aq-nonsymm}) as $q\to
n-1$. In particular, if $n$ is odd, then
\begin{equation}\label{A^(n-1)oddn}
A_{K,\,\xi}^{(n-1)}(0) =   \frac{ (-1)^{(n-1)/2}}{2\pi} \Big(\ln\Vert x\Vert_K + \ln\Vert - x\Vert_K
	 \Big)^\wedge (\xi),
\end{equation}
for every $\xi\in S^{n-1}$.

\section{Main results}

First we prove a version of Agranovsky's result in the setting of Problem \ref{deriv}.

\begin{proposition}
There are no infinitely smooth convex bodies in $\R^n$ for even $n$ satisfying the condition of Problem \ref{deriv}.
\end{proposition}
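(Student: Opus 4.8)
The plan is to exploit formula (\ref{A^(k-even)}) together with the hypothesis that $A_{K,\xi}^{(m)}(0)=0$ for all even $m\ge N$ and all $\xi\in S^{n-1}$. Since $n$ is even, $n-1$ is odd, so every even integer $k\ge N$ satisfies $k\ne n-1$ and formula (\ref{A^(k-even)}) applies without the logarithmic correction. Thus for every even integer $m\ge N$ we obtain
$$\Big(\|x\|_K^{-n+1+m}+\|-x\|_K^{-n+1+m}\Big)^\wedge(\xi)=0\quad\text{for all }\xi\in S^{n-1}.$$
The function $g_m(x)=\|x\|_K^{-n+1+m}+\|-x\|_K^{-n+1+m}$ is an even, homogeneous function of degree $-n+1+m$ on $\R^n\setminus\{0\}$; when $m\ge n$ this exponent is positive, so $g_m$ is continuous on all of $\R^n$ (extending it by $0$ at the origin once $-n+1+m>0$), hence a tempered distribution, and its Fourier transform is a homogeneous distribution of degree $-n-(-n+1+m)=-1-m$.

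First I would argue that the Fourier transform $\widehat{g_m}$ is supported at the origin. Indeed, by formula (\ref{A^(k-even)}) the function $\widehat{g_m}$, which is homogeneous of degree $-1-m<0$, vanishes on the sphere $S^{n-1}$ and therefore vanishes on $\R^n\setminus\{0\}$ by homogeneity; so $\widehat{g_m}$ is a distribution supported at $\{0\}$. A distribution supported at the origin is a finite linear combination of derivatives of the delta function, and therefore is homogeneous of an integer degree $\le -n$ (a $k$-th order derivative of $\delta$ is homogeneous of degree $-n-k$). But $\widehat{g_m}$ is homogeneous of degree $-1-m$, and for this to be of the form $-n-k$ with $k\ge 0$ an integer we would need $m\ge n-1$; since $m\ge N$ we may, after possibly increasing $N$, take $m\ge n$, and then $-1-m\le -1-n<-n$, which forces $k=m+1-n\ge 1$. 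So far this is consistent; the sharper point is that $\widehat{g_m}$ being supported at $0$ forces $g_m$ itself to be a polynomial. Then $\|x\|_K^{-n+1+m}+\|-x\|_K^{-n+1+m}$ is a polynomial on $\R^n$, homogeneous of degree $-n+1+m$.

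Now I would run the standard contradiction: a homogeneous polynomial of positive integer degree $d=-n+1+m$ that equals $\|x\|_K^d+\|-x\|_K^d$ is nonnegative and vanishes only at the origin (since $\|\cdot\|_K$ is a positive continuous function on the sphere); but a polynomial that is strictly positive on $S^{n-1}$ cannot vanish at any nonzero point, and indeed $\|x\|_K^d+\|-x\|_K^d$ does not vanish on $\R^n\setminus\{0\}$, so this part is fine. The genuine contradiction comes instead from choosing two consecutive even values $m$ and $m+2$ (both $\ge N$, both even): both $g_m$ and $g_{m+2}$ are then polynomials, homogeneous of degrees $d$ and $d+2$ respectively, where $d=-n+1+m$. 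Writing $G_d(x)=\|x\|_K^d+\|-x\|_K^d$, one has on $\R^n\setminus\{0\}$ the relation tying $G_{d}$ and $G_{d+2}$ through $\|x\|_K^2$; more usefully, restricting to the ray $\{r\theta:r>0\}$ for fixed $\theta\in S^{n-1}$, $G_d(r\theta)=r^d(\|\theta\|_K^d+\|-\theta\|_K^d)$, so requiring this to be a polynomial in $x$ (hence in particular restricting to a line through the origin to be a polynomial in $r$) already forces $d$ to be a nonnegative even integer and $\|\theta\|_K^d+\|-\theta\|_K^d$ to be the restriction to the sphere of a polynomial; comparing $d$ and $d+2$ one deduces $\|\theta\|_K^{d}=\|-\theta\|_K^{d}$ up to the polynomial structure and ultimately that $\|\cdot\|_K^2$ is a positive-definite quadratic form, i.e. $K$ is an ellipsoid — but ellipsoids in even dimension do not have polynomial parallel section functions (their sections are $c_n(r^2-t^2)^{(n-1)/2}$ with $(n-1)/2$ not an integer), contradiction.

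The main obstacle I anticipate is the last paragraph: turning "$\|x\|_K^d+\|-x\|_K^d$ is a polynomial for two consecutive even $d$" into "$K$ is an ellipsoid (or no such $K$ exists)" cleanly. The cleanest route is probably to avoid ellipsoids entirely: show directly that $G_d(x)=\|x\|_K^d+\|-x\|_K^d$ being a polynomial of odd degree is impossible (here $d=-n+1+m$ has the parity of $1-n+m=1+\text{even}=\text{odd}$ since $n$ is even), because a homogeneous polynomial of odd degree changes sign, whereas $G_d\ge 0$ with equality only at $0$. This is the slick version: $n$ even and $m$ even force $d=-n+1+m$ to be an \emph{odd} positive integer once $m\ge n$, so $G_d$ would be a strictly positive (off the origin) homogeneous polynomial of odd degree, which is absurd. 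I would present this as the core argument and use the Fourier-support step above only to get that $G_d$ is a polynomial in the first place.
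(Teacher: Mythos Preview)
Your final argument is correct and is exactly the paper's approach: pick a single even $m\ge\max\{N,n\}$, use (\ref{A^(k-even)}) to conclude that $g_m=\|x\|_K^{-n+1+m}+\|-x\|_K^{-n+1+m}$ has Fourier transform supported at the origin and hence is a polynomial, and then note that $g_m$ is an even function of $x$ but homogeneous of the \emph{odd} degree $-n+1+m$, forcing $g_m\equiv 0$, which contradicts positivity. The middle paragraph about two consecutive even values and ellipsoids is an unnecessary detour that you yourself correctly abandon; drop it and lead with the parity observation.
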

\begin{proof}
Let $K$ be an infinitely smooth   convex body in $\R^n$
satisfying the condition of Problem \ref{deriv}.
Take an even $m\ge \max\{N,n\}$.  Then using (\ref{A^(k-even)}) we have
$$
 \Big(\Vert x\Vert_K^{-n+1+m}+\Vert - x\Vert_K^{-n+1+m}
	 \Big)^\wedge (\xi)=   { (-1)^{m/2}}{2\pi(n-m-1)}
         A_{K,\,\xi}^{(m)}(0) = 0,
$$
for every $\xi\in S^{n-1}$.

Thus the Fourier transform of $ f(x)= \|x\|_K^{-n+1+m}+\|-x\|_K^{-n+1+m}$ is zero outside of
the origin, implying that $  f(x)$ can only be a
polynomial. This polynomial has to be even, since the
function $  f(x)$  is even. On the other hand, since
$-n+1+m$ is an odd number, $  f(x)$  has to be an odd
polynomial. Thus $f(x)$   is zero for all $x\in \mathbb
R^n$, which is impossible.
\end{proof}

Note that in the proof we needed only one derivative of the parallel section
function.

\begin{proposition} For odd $n,$ there are no smooth bodies in $\R^n$ satisfying the condition of Problem \ref{deriv}
with $N<n.$
\end{proposition}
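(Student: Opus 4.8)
The strategy is to show that the hypothesis of Problem \ref{deriv} with $N < n$ forces, for a suitable even integer $m$, that a certain even homogeneous function on the sphere is identically zero, which is impossible because the function is strictly positive. Fix an odd integer $n$ and suppose $K$ is an infinitely smooth convex body satisfying the condition of Problem \ref{deriv} with $N < n$, i.e.\ $A_{K,\xi}^{(m)}(0) = 0$ for every even $m \ge N$ and every $\xi \in S^{n-1}$. Since $N < n$ and $N$, $n-1$ have the parity constraint only that $n-1$ is even, I want to pick an even integer $m$ with $N \le m$ and $m$ strictly less than $n-1$, so that the exponent $-n+1+m$ is negative and formula (\ref{A^(k-even)}) applies with $n - m - 1 \ne 0$. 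Concretely, if $N \le n-1$ I take $m = n-1$ when $n-1$ is even — but that makes $n-m-1=0$, so instead I should take the largest even integer $m$ with $m \le n - 3$ that is also $\ge N$; this is possible precisely because $N \le n - 3$ would be needed, so I must be slightly more careful about the boundary cases $N = n-1$ and $N = n-2$.

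**Handling the cases.** Since $n$ is odd, $n - 1$ is even. If $N \le n - 3$, choose $m$ to be the smallest even integer with $m \ge N$; then $N \le m \le n - 2$, and since $m$ is even while $n-1$ is even we could still have $m = n-1$ only if $N = n-1$ or $N = n - 2$, which are excluded here, so $m \le n - 3 < n - 1$ and $n - m - 1 \ge 2 > 0$. By (\ref{A^(k-even)}), $\big(\|x\|_K^{-n+1+m} + \|-x\|_K^{-n+1+m}\big)^\wedge(\xi) = 0$ for all $\xi$, hence the even function $f(x) = \|x\|_K^{-n+1+m} + \|-x\|_K^{-n+1+m}$, which is homogeneous of degree $-n+1+m < 0$, has Fourier transform supported at the origin, so $f$ is a polynomial; being homogeneous of negative degree and a polynomial, $f \equiv 0$, contradicting $f > 0$ on $\R^n \setminus \{0\}$. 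For $N = n - 1$, the same argument works verbatim with $m = n - 1$ replaced by $m = n+1$ if we still want $n - m - 1 \ne 0$ — wait, $n-m-1$ would be negative, which is fine: the formula (\ref{A^(k-even)}) holds for all even $k \ne n-1$, and the exponent $-n+1+m$ becomes positive; but then the polynomial conclusion fails. So instead, for $N \le n - 1$ I should simply use $m = n - 3$ if $n - 3 \ge N$, i.e.\ for $N \le n-3$; and for $N \in \{n-2, n-1\}$ I use $m = n - 1$ together with the logarithmic formula (\ref{A^(n-1)oddn}).

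**The boundary case via the logarithm.** For $N = n-1$ (and $N = n-2$, since the even $m \ge N$ closest is $m = n-1$ when $N = n-2$ too), the hypothesis gives in particular $A_{K,\xi}^{(n-1)}(0) = 0$ for all $\xi \in S^{n-1}$. By (\ref{A^(n-1)oddn}), since $n$ is odd, $\big(\ln\|x\|_K + \ln\|-x\|_K\big)^\wedge(\xi) = 0$ for every $\xi$, so the distribution $g(x) = \ln\|x\|_K + \ln\|-x\|_K$ has Fourier transform supported at the origin, hence $g$ is a polynomial. But $g$ is a logarithmically homogeneous function: $g(\lambda x) = g(x) + 2\ln\lambda$ for $\lambda > 0$, which is incompatible with $g$ being a polynomial (a polynomial $P$ with $P(\lambda x) = P(x) + 2\ln\lambda$ would force, comparing growth rates, a contradiction — evaluate at a fixed $x_0 \ne 0$ and let $\lambda \to \infty$ to see the left side grows like a power of $\lambda$ or is constant, never like $\ln\lambda$). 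This contradiction completes the case.

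**Main obstacle.** The only real subtlety is the bookkeeping of which even $m \ge N$ to select so that $m \ne n-1$ and $-n+1+m < 0$ simultaneously; for most $N$ this is automatic, but the two values $N = n-2$ and $N = n-1$ genuinely require the separate logarithmic argument of (\ref{A^(n-1)oddn}) rather than the polynomial-homogeneity argument, and one must check that the hypothesis for these $N$ does supply the vanishing of the $(n-1)$-st derivative. Everything else — the passage from "Fourier transform supported at origin" to "is a polynomial" and the homogeneity obstruction — is routine given the preliminaries, mirroring the even-$n$ proposition above.
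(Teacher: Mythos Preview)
Your proof is correct and follows essentially the same two-case strategy as the paper: for $N\le n-3$ apply (\ref{A^(k-even)}) with an even $m\le n-3$ to force a strictly positive function homogeneous of negative degree to be a polynomial (hence zero), and for $N\in\{n-2,n-1\}$ apply (\ref{A^(n-1)oddn}) with $m=n-1$ to force $\ln\|x\|_K+\ln\|-x\|_K$ to be a polynomial, contradicting its logarithmic homogeneity. The paper streamlines slightly by fixing $m=n-3$ in the first case and by noting in the second that $\widehat g$, being homogeneous of degree $-n$, must be a multiple of $\delta$ (so $g$ is constant rather than merely polynomial), but these are cosmetic differences; your exposition would benefit from removing the false starts (``wait'', ``so instead'') and simply stating the two cases directly.
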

\begin{proof} Suppose such a body $K$ exists. If $N\le n-3$, we let $m=n-3$  so that $m$ is an even integer.
The distribution $(\|x\|_K^{-n+m+1}+\|-x\|_K^{-n+m+1})^\wedge=(\|x\|_K^{-2}+\|-x\|_K^{-2})^\wedge$ is an extension of
a continuous function on the sphere to a homogeneous function of degree $-n+2$ on the whole of $\R^n.$
Since $-n+2>-n,$   this function is locally integrable, and as a distribution
acts on test functions by integration. On the other hand, by
(\ref{A^(k-even)}) and the hypothesis of the proposition, this homogeneous
function is equal to zero on the sphere. So the distribution $(\|x\|_K^{-2}+\|-x\|_K^{-2})^\wedge$ is equal to zero
on all test functions, and $\|x\|_K^{-2}+\|-x\|_K^{-2}=0$ everywhere, which is impossible.

If   $N= n-2$ or $N=n-1$, the argument is similar. We let $m=n-1$  and
use   formula (\ref{A^(n-1)oddn}) to get
 $$ \Big(\ln\Vert x\Vert_K+\ln\Vert - x\Vert_K
	 \Big)^\wedge (\xi)=0,$$
for every $\xi\in S^{n-1}$.

Since $ \left( \ln   \|x\|_K + \ln\Vert - x\Vert_K \right)^\wedge$ is homogeneous of degree
$-n$, this Fourier transform can only be a multiple of the delta
function supported at the origin. So
 $\ln   \|x\|_K + \ln\Vert - x\Vert_K$ must be a constant for all $x\in\mathbb R^n$, which
 is impossible.

\end{proof}

For the reader's convenience we will first settle the case $N=n$ in odd dimensions, since the argument here is  much simpler than that for $N>n$.

\begin{proposition} Let $K$ be an infinitely smooth convex body in $\R^n$, $n$ is odd,   satisfying the condition of Problem \ref{deriv}
with $N=n.$ Then $K$ is an ellipsoid.
\end{proposition}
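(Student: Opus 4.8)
The plan is to turn the vanishing of the even derivatives into an algebraic rigidity statement for $\phi(x):=\|x\|_K^2$, and to bring in convexity only at the very end.

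Since $n$ is odd, as $m$ runs over the even integers $\ge n$ the exponent $-n+1+m$ runs over $2,4,6,\dots$. By (\ref{A^(k-even)}) and the hypothesis, for each such exponent the (even) function $\|x\|_K^{-n+1+m}+\|-x\|_K^{-n+1+m}$ has Fourier transform vanishing on $S^{n-1}$, hence vanishing on all of $\R^n\setminus\{0\}$ by homogeneity, hence supported at the origin; thus the function itself is a polynomial. Putting $\psi(x):=\|-x\|_K^2$, this says that $\phi^j+\psi^j$ is a homogeneous polynomial for every $j\ge 1$. Applying this for $j=1,2$ and using Newton's identities, $e_1:=\phi+\psi$ is a homogeneous polynomial of degree $2$ and $e_2:=\phi\psi$ is a homogeneous polynomial of degree $4$. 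Since $e_1=\|x\|_K^2+\|-x\|_K^2>0$ off the origin, $e_1$ is a positive definite quadratic form, and after a linear change of coordinates (harmless, since both the hypothesis and the class of ellipsoids are affine invariant) we may assume $e_1(x)=|x|^2$. Then $\psi=|x|^2-\phi$ and $\phi^2-|x|^2\phi+e_2=0$.

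Now set $v:=\phi-\tfrac12|x|^2$, the odd part of $\phi$. Then $v$ is odd, $C^\infty$ on $\R^n\setminus\{0\}$, positively $2$-homogeneous, and $v^2=\tfrac14|x|^4-e_2=:\Delta$, a nonnegative homogeneous quartic. Write $\Delta=g^2h$ with $h$ square-free; then $h\ge 0$ and $\deg h\in\{0,2,4\}$. A short case analysis — using that $v$ is odd and $C^\infty$ — shows that $\deg h\in\{0,4\}$ forces $v$ to be an even function, hence $v\equiv 0$; and that $\deg h=2$ forces $h$ to be positive definite, since otherwise $v=\pm g\sqrt h$ would fail to be twice differentiable along the zero set of $h$ (compare $x\mapsto x_1\sqrt{x_1^2+x_2^2}$ near $\{x_1=x_2=0\}$). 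So either $v\equiv 0$ — in which case $\phi=\tfrac12|x|^2$ is a quadratic form and $K$ is an ellipsoid — or $v=\ell(x)\sqrt{q(x)}$ for a nonzero linear form $\ell$ and a positive definite quadratic form $q$, with $\Delta=\ell^2q$.

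In this last case, on $\partial K=\{\phi=1\}$ we have $v=2-e_1$, so $\partial K\subseteq\{e_2-e_1+1=0\}=\{(1-\phi)(1-\psi)=0\}=\partial K\cup(-\partial K)$. An elementary computation shows that the quartic $e_2-e_1+1$ splits into a product of two polynomial quadrics precisely when $\Delta=\ell^2(2e_1-\ell^2)$ for some linear form $\ell$, i.e. (since $\Delta=\ell^2q$ and $q$ is positive definite, so cannot be a scalar multiple of $\ell^2$) precisely when $q=2e_1-\ell^2$ after a suitable normalization of the pair $(\ell,q)$; and once it splits, the connected compact smooth hypersurface $\partial K$ lies on one of the two quadrics, which is then necessarily an ellipsoid — done. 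Thus everything reduces to the identity $q=2e_1-\ell^2$, and this is exactly where convexity enters: a priori $\phi=\tfrac12(e_1+\ell\sqrt q)$ satisfies all of the polynomial conditions above for every positive definite $q$, and only the convexity of $\sqrt\phi=\|x\|_K$ selects the ellipsoidal $q$. Concretely I would work in coordinates with $\ell(x)=\alpha x_1$, compute $\nabla\phi$ and $\nabla^2\phi$ from $\phi=\tfrac12(e_1+\ell\sqrt q)$, and impose that $\nabla^2\phi$ be positive semidefinite on the tangent hyperplane $\{\langle\nabla\phi(x),\cdot\rangle=0\}$ at every $x\in\partial K$ (the quasi-convexity criterion for the level set $\partial K$); matching the terms that involve $q$ should then force $q=2e_1-\ell^2$. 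I expect this last step — extracting the exact algebraic identity from the convexity inequality — together with the case analysis for $v$ in the previous paragraph, to be the crux; the reductions in the first two paragraphs are routine.
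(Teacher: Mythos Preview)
There is a genuine gap, and it sits exactly where you flagged the crux: convexity does \emph{not} force $q=2e_1-\ell^2$. Take, in odd dimension $n\ge 3$,
\[
\|x\|_K^2=\tfrac12\bigl(|x|^2+\alpha\,x_1\,|x|\bigr),\qquad 0<\alpha\ \text{small}.
\]
Here $e_1=\phi+\psi=|x|^2$ and $e_2=\phi\psi=\tfrac14|x|^2(|x|^2-\alpha^2x_1^2)$ are polynomials, so by Newton's identities $\phi^j+\psi^j$ is a polynomial for every $j\ge 1$; hence all even derivatives $A_{K,\xi}^{(m)}(0)$ with $m\ge n+1$ vanish, which is precisely the even-derivative hypothesis you are using. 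The body is infinitely smooth (since $\phi>0$ on $\R^n\setminus\{0\}$) and, being a $C^\infty$-small perturbation of a ball, convex. In your notation $v=\tfrac{\alpha}{2}x_1|x|$, so $\ell=\tfrac{\alpha}{2}x_1$ and $q=|x|^2$, which is \emph{not} $2e_1-\ell^2=2|x|^2-\tfrac{\alpha^2}{4}x_1^2$. And $K$ is not an ellipsoid: its boundary $|x|^2+\alpha x_1|x|=2$ is not a quadric (eliminating $|x|$ gives a genuine quartic). So your quasi-convexity computation cannot possibly yield the identity $q=2e_1-\ell^2$; the even-derivative information together with convexity is simply not enough.

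What the paper does differently is use the \emph{odd} derivative of order $n$ as well. Via formula~(\ref{A^(k-odd}) with $k=n$ one gets directly that $\|x\|_K-\|-x\|_K=P(x)$ for a linear polynomial $P$; combined with $\|x\|_K^2+\|-x\|_K^2=Q(x)$ (from $k=n+1$, even) this solves to $\|x\|_K=\tfrac12\bigl(P+\sqrt{2Q-P^2}\bigr)$, and setting $\|x\|_K=1$ and squaring shows $\partial K$ lies on a quadric --- done in two lines, with convexity used only to say the bounded quadric is an ellipsoid. In the counterexample above one checks that $\|x\|_K-\|-x\|_K$ is \emph{not} linear (expand to third order in $\alpha$ along $e_1$ and along $e_1+e_2$), so this odd-order input is exactly the missing piece. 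Note also that the paper's final Theorem states the hypothesis as ``for every $m\ge N$'', not ``for every even $m\ge N$''; you have taken the literal wording of Problem~\ref{deriv}, but the proof the paper actually gives requires the odd-order vanishing too.
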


\begin{proof}
Using formulas (\ref{A^(k-odd)}) and (\ref{A^(k-even)}) with $k=n$ and $k=n+1$ correspondingly, we get

$$  \left( \|x\|_K  - \|-x\|_K  \right)^\wedge(\xi)=0$$
and
$$  \left( \|x\|_K^{2} + \|-x\|_K^{2}\right)^\wedge(\xi)=0,$$
for all $x \in S^{n-1}$. This implies that
$ \|x\|_K  - \|-x\|_K = P(x)$ and $\|x\|_K^{2} + \|-x\|_K^{2} = Q(x)$ for some homogeneous polynomials $P$ and $Q$ of degrees 1 and 2 correspondingly. Solving this system of two equations, we get
$$\|x\|_K = \frac{1}{2}\left(P(x) +\sqrt{2Q(x)- P^2(x)}\right).$$
To see that this defines an ellipsoid, we let $x$ be a point on the boundary of $K$. Then $\|x\|_K =1$ and therefore
 $$2 -  P(x) = \sqrt{2Q(x)- P^2(x)} .$$
 Squaring both sides, we get an equation of a quadric surface. This can only be the surface of an ellipsoid, since the boundary of $K$ is a complete bounded surface.

\end{proof}


Now we will treat the case $N>n$. We will need the following auxiliary results.

\begin{lemma}\label{lem:product}
Let $L$ be an origin-symmetric star body in $\mathbb R^n$. Suppose that for some positive even $k$ there are homogeneous polynomials $P$ and $Q$   so that $\|x\|_L^k = P(x)$ and $\|x\|_L^{k+2} = Q(x)$  for all $x\in \mathbb R^n$. Then $L$ is an ellipsoid.
\end{lemma}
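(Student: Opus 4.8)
The plan is to exploit the two polynomial identities $\|x\|_L^k = P(x)$ and $\|x\|_L^{k+2} = Q(x)$ to deduce that the lower-degree quantity $\|x\|_L^2$ is itself a polynomial of degree $2$; once that is known, $\|x\|_L^2 = R(x)$ for a positive-definite quadratic form $R$ (positivity forced by $\|x\|_L > 0$ off the origin), which is exactly the statement that $L$ is an ellipsoid. Since $L$ is origin-symmetric both $P$ and $Q$ are even; as $k$ and $k+2$ are even, this is consistent, and $Q = \|x\|_L^2 \cdot P$ on all of $\R^n$, i.e. the polynomial $Q$ is divisible by the continuous function $\|x\|_L^2$ with polynomial quotient $P$.

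The key step is a divisibility/irreducibility argument. First I would pass to the restriction of $P$ and $Q$ to an arbitrary line through the origin, or better, homogenize and work in $\R^n$ directly: the identity $\|x\|_L^{k+2} P(x) = \|x\|_L^k Q(x)$ gives $P(x)^{(k+2)/k} = $ (up to the $L$-norm bookkeeping) $\dots$ — more cleanly, raising $\|x\|_L^k = P(x)$ to the power $(k+2)/k$ is not polynomial, so instead I use that $P(x)^{k+2} = \|x\|_L^{k(k+2)} = Q(x)^k$ as polynomial identities on $\R^n$. Thus $P^{k+2} = Q^k$ in the (UFD) polynomial ring $\R[x_1,\dots,x_n]$. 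Factoring $P$ and $Q$ into irreducibles and comparing multiplicities, every irreducible factor of $P$ must appear in $Q$ and vice versa, and the exponents must satisfy the relation forced by $P^{k+2}=Q^k$; since $\gcd(k,k+2)\in\{1,2\}$, a short case analysis shows there is a polynomial $S$ with $P = S^{k/d}$ and $Q = S^{(k+2)/d}$ where $d=\gcd(k,k+2)$. If $d=1$ this already gives $\|x\|_L^k = S(x)^k$, hence $\|x\|_L = |S(x)|^{1/1}$-type conclusion forcing $\deg S$ appropriately; if $d=2$ we get $\|x\|_L^k = S(x)^{k/2}$, so $\|x\|_L^2 = S(x)$ after checking degrees (both sides homogeneous of degree $2$) and positivity. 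Either way $\|x\|_L^2$ is a positive-definite quadratic form.

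The main obstacle I anticipate is handling the irreducible-factor exponents carefully over $\R$ rather than $\C$: an irreducible real polynomial can factor over $\C$, and one must make sure the "square root" $S$ one extracts is a genuine polynomial with real coefficients and the correct degree, and that no sign ambiguity (from even powers) sneaks in. This is where origin-symmetry and the evenness of $k$ are used — they guarantee the relevant quotient is an \emph{even} polynomial, eliminating odd-degree obstructions. A cleaner route that sidesteps multivariate factorization: restrict to each $2$-plane or even each line $\{r\theta : r\in\R\}$, where $P(r\theta)=p(\theta)r^k$, $Q(r\theta)=q(\theta)r^{k+2}$, and $\|r\theta\|_L = r/\rho_L(\theta)$ for $r>0$; then $q(\theta)/p(\theta) = \|\theta\|_L^2$ is forced to be, on the sphere, the restriction of the ratio of two polynomials, and one shows this ratio extends to a quadratic form globally by invoking that a convex body whose radial function has $\|x\|_L^2$ rational with denominator bounded must in fact have it polynomial of degree $2$. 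I would present the factorization-in-$\R[x_1,\dots,x_n]$ version as the main argument, as it is the shortest and most robust.
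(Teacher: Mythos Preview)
Your approach is correct and takes a genuinely different route from the paper's. The paper restricts to an arbitrary $2$-dimensional subspace $H$, dehomogenizes to obtain single-variable polynomials $P(u,1)$ and $Q(u,1)$, and analyzes their complex roots: from $P^{k+2}=Q^k$ one finds that each root of $P(u,1)$ has multiplicity exactly $k/2$, forcing $P(u,1)$ to be (a constant times) the $(k/2)$-th power of a real quadratic, so that $L\cap H$ is an ellipse; finally the Jordan--von Neumann characterization of inner product spaces is invoked to pass from ``every $2$-dimensional central section is an ellipse'' to ``$L$ is an ellipsoid.'' You instead stay in the multivariate polynomial ring $\R[x_1,\dots,x_n]$: from $P^{k+2}=Q^k$ and unique factorization you compare the multiplicity of each irreducible factor and, since $k$ is even and $\gcd(k,k+2)=2$, deduce that $P$ and $Q$ are respectively the $(k/2)$-th and $((k+2)/2)$-th powers of a common polynomial $S$, giving $\|x\|_L^2=S(x)$ directly as a positive-definite quadratic form. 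This is shorter and avoids Jordan--von Neumann entirely. Two minor clean-ups: since $k$ is even by hypothesis, $\gcd(k,k+2)=2$ always, so your case $d=1$ is vacuous and should be dropped; and your worry about working over $\R$ rather than $\C$ is unfounded---the UFD argument is insensitive to the ground field, and positivity of $\|x\|_L^k$ and $\|x\|_L^{k+2}$ on $\R^n\setminus\{0\}$ handles all the sign and constant bookkeeping when extracting $S$.
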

 \begin{proof} By the hypothesis of the lemma we have  $(P(x))^{k+2} = (Q(x))^{k}$ for all $x$.  Now let us consider any two-dimensional subspace $H$ of $\mathbb R^n$. The restrictions of $P$ and $Q$ to $H$ are again homogeneous polynomials of degrees $k$ and $k+2$ correspondingly. Abusing notation, we will denote these restrictions by $P(u,v)$ and $Q(u,v)$, where $(u,v)\in \mathbb R^2$. Thus we have $ (P(u,v))^{k+2} = (Q(u,v))^{k}$ for all $(u,v)\in \mathbb R^2$.  Since both $P$ and $Q$ are homogeneous, the latter is equivalent to
$$ (P(u,1))^{k+2} = (Q(u,1))^{k}, \qquad \forall u\in \mathbb R.$$
We have the equality of two polynomials of the real variable $u$,
therefore these polynomials are equal for all $u \in \mathbb C$. Let $u_0$ be a complex root of $P(u,1)$ of multiplicity $\alpha\le k$. Then $u_0$ is also a root of $Q(u,1)$ of some multiplicity $\beta\le k+2$. Hence we have
$$\alpha (k+2) = \beta k,$$
$$\frac{\alpha}{\beta} = \frac{k}{k+2}.$$
Recall that $k$ is even, say $k = 2 l$, $l \in \mathbb N$.
Thus $$\frac{\alpha}{\beta} = \frac{l}{l+1}.$$
Since $l$ and $l+1$ are co-prime, there are only two possibilities for $\alpha$ and $\beta$: either $\alpha=l$, $\beta = l+1$, or $\alpha = 2l$, $\beta = 2\l + 2$. The latter is impossible since it implies that
$$ \|(u,v) \|_{L\cap H}^k = P(u,v) = c (u - v u_0)^k,$$
for some constant $c$.
So the remaining possibility is that $P(u,1)$ has two complex roots, say $a$ and $b$ of multiplicity $l$. Therefore,
\begin{eqnarray*}\|(u,1) \|_{L\cap H}^k = P(u,1)& =& c [(u -   a)(u -
  b)]^l \\ & =&  c [u^2 - (a+b) u   + ab ]^l = c [ u^{2l} - (a+b) u ^{2l -1}
  + ab u^{2l -2} + \cdots].
\end{eqnarray*}
Since the restriction of this polynomial to $\mathbb R$ has real coefficients, it follows that $a+b$ and $ab$ are real numbers. Since $a$ and $b$ cannot be real, we conclude that they are complex conjugates of each other. Therefore,    $\|(u,v) \|_{K\cap H}^2 =  \bar c [u^2 - (a+b) u v  + ab v^2 ]$ is a nondegenerate quadratic form. Thus $L\cap H$ is an ellipse. Since every 2-dimensional central section of $L$ is an ellipse, $L$ has to be an ellipsoid. The latter is a consequence of the Jordan - von Neumann characterization of inner product spaces by the parallelogram equality;
see \cite{JN}.
 \end{proof}


We will use the following notation.  $A$ is the ring of polynomial real functions on $\R^n$, $F$  is the field of rational real functions, i.e.,
functions of the form $g/h$, where $g$ and $h$ are in $A$. The ring $A$ is a factorial ring, i.e.,
every nonzero polynomial factors uniquely into a product of irreducible polynomials (see \cite[Chapter IV, Corollary 2.4]{Lang}).
The field $F$ is the quotient field of $A$.

\begin{lemma}\label{quadratic}
Let $P,P',Q,Q'\in A$ be such that $Q$ and $Q'$ are non-negative and $Q'$ is not
a square in $A$. If $P+\sqrt{Q}=P'+\sqrt{Q'}$, then $P=P'$ and $Q=Q'$.
\end{lemma}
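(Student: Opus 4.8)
The plan is to rearrange the hypothesis $P+\sqrt{Q}=P'+\sqrt{Q'}$ into the form $\sqrt{Q}-\sqrt{Q'}=P'-P$, so that the left-hand side is a polynomial; call it $R:=P'-P\in A$. If I can additionally get a second polynomial relation involving $\sqrt{Q}$ alone, I can conclude that $Q$ is forced to be a square or that $R=0$. Concretely, square the relation $\sqrt{Q}=R+\sqrt{Q'}$ to obtain $Q=R^2+Q'+2R\sqrt{Q'}$, hence $2R\sqrt{Q'}=Q-Q'-R^2\in A$. Now I argue inside the field $F$: since $\sqrt{Q'}$ is a polynomial whenever it lies in $F$ (because $A$ is factorial, so a rational function whose square is a polynomial is itself a polynomial — write it in lowest terms $g/h$ and see $h^2\mid g^2$ forces $h$ to be a unit), and $Q'$ is assumed \emph{not} a square in $A$, the element $\sqrt{Q'}$ is irrational over $F$, i.e.\ $\sqrt{Q'}\notin F$.

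From $2R\sqrt{Q'}=Q-Q'-R^2$ with the right-hand side in $A\subset F$ and $\sqrt{Q'}\notin F$, I conclude $R=0$: otherwise dividing by $2R$ would express $\sqrt{Q'}$ as a ratio of polynomials. Once $R=0$ we have $P=P'$, and then the original relation gives $\sqrt{Q}=\sqrt{Q'}$, hence $Q=Q'$ (both are non-negative, so taking squares is reversible). The only point requiring a word of care is the meaning of $\sqrt{Q}$ and $\sqrt{Q'}$: these are the pointwise non-negative square roots of non-negative polynomial functions on $\R^n$, so the displayed identities are identities of functions on $\R^n$, and two polynomial functions on $\R^n$ that agree everywhere are equal as elements of $A$. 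This lets me move freely between the functional identities and algebraic identities in $A$.

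The main obstacle is the step "$\sqrt{Q'}\notin F$", which is exactly where the hypothesis that $Q'$ is not a square in $A$ is used; the content is the elementary fact that in the factorial ring $A$ an element with a square root in the fraction field already has the square root in $A$. Everything else is formal manipulation. I would state that factoriality fact as a one-line sub-argument (lowest-terms representation plus unique factorization) and then assemble the four displayed identities above in order: (i) isolate $\sqrt{Q}$, (ii) square to get $2R\sqrt{Q'}\in A$, (iii) deduce $R=0$ from irrationality of $\sqrt{Q'}$ over $F$, (iv) conclude $P=P'$ and then $Q=Q'$.
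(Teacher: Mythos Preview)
Your proof is correct and follows essentially the same route as the paper's: both rearrange to $\sqrt{Q}=(P'-P)+\sqrt{Q'}$, square to obtain $2(P'-P)\sqrt{Q'}\in A$, and then use that $Q'$ is not a square in $A$ (hence $\sqrt{Q'}\notin F$, by factoriality of $A$) to force $P'-P=0$. The paper states the implication ``square in $F$ $\Rightarrow$ square in $A$'' without justification, whereas you spell out the lowest-terms argument; otherwise the two proofs are identical.
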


\begin{proof}
We have
\[
Q=(P'-P+\sqrt{Q'})^2=(P'-P)^2 +2(P'-P)\sqrt{Q'}+Q'.
\]
If $P'\neq P$, then $\sqrt{Q'}\in F$, hence $Q'$ is a square in $F$ and therefore, $Q'$ is
a square in $A$, a contradiction. It follows that
$P'=P$ and hence $Q'=Q$.
\end{proof}

\begin{theorem}\label{f^m}
Let $f$ be a positive homogeneous function on $\R^n$, $n\geq 2$, of degree $1$ such that
$f(x)^m = P_m (x) + \sqrt{ Q_m(x)}$ for large odd $m$,
where $P_m$ are degree $m$ polynomials, and $Q_m$ are positive degree $2m$ polynomials.
Then $f(x)= P (x) + \sqrt{ Q(x)}$,
where $P$ is a linear polynomial and $Q$ is a positive quadratic polynomial.
\end{theorem}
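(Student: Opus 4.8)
The plan is to exploit unique factorization in the polynomial ring $A$ together with Lemma \ref{quadratic} to pin down the structure of the $P_m$ and $Q_m$, then take a kind of "$m$-th root" of the relation $f^m = P_m + \sqrt{Q_m}$ to recover a single linear $P$ and quadratic $Q$.

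\textbf{Step 1: Reduce to a single two-dimensional subspace and work with roots.} Fix $m$ large and odd. Consider the equation $f(x)^m = P_m(x) + \sqrt{Q_m(x)}$. Squaring, $(f(x)^m - P_m(x))^2 = Q_m(x)$, so $f(x)^{2m} - 2 f(x)^m P_m(x) + P_m(x)^2 = Q_m(x)$; this shows that $f^m$ (hence $f^{2m}$, $P_m$, $Q_m$) are all algebraic over $F$ of a controlled degree. The cleanest route is to restrict everything to an arbitrary $2$-plane $H$ and parametrize as in Lemma \ref{lem:product}: write $p(u) = P_m(u,1)$, $q(u) = Q_m(u,1)$, so $\deg p \le m$, $\deg q \le 2m$, and $(f(u,1))^m - p(u) = \pm\sqrt{q(u)}$. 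The function $g(u) := f(u,1)$ is positive, and $g^m - p = \pm \sqrt q$ forces $q(u) = (g(u)^m - p(u))^2$.

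\textbf{Step 2: Identify the irreducible structure.} Since $Q_m$ is a positive polynomial that is (by the degree-$2m$, $a_N \not\equiv 0$ type hypothesis implicit in the source) not a perfect square in $A$, Lemma \ref{quadratic} applies whenever we have two representations $P + \sqrt Q = P' + \sqrt{Q'}$. Now I would factor $Q_m = Q_m^{(0)} \cdot R_m^2$ in $A$, where $Q_m^{(0)}$ is square-free; then $f^m = P_m + R_m \sqrt{Q_m^{(0)}}$, and $\sqrt{Q_m^{(0)}}$ generates a fixed quadratic extension $F(\sqrt{Q_m^{(0)}})$ of $F$ on the plane $H$. The element $f^m$ lies in this quadratic extension. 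The key algebraic observation: $f^{2m}$ is the \emph{norm-type} object $f^m \cdot \overline{f^m}$ up to sign — more precisely, $f^m$ and its conjugate $P_m - R_m\sqrt{Q_m^{(0)}}$ multiply to $P_m^2 - R_m^2 Q_m^{(0)} = P_m^2 - Q_m$, a polynomial. Since $f > 0$, on the real points $f^m = P_m + \sqrt{Q_m}$ with the $+$ sign, and $f^m$ is a well-defined algebraic function.

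\textbf{Step 3: Take the $m$-th root.} This is where I expect the main obstacle. We know $f^m \in F(\sqrt{Q_m^{(0)}})$ for every $2$-plane $H$, and we want to conclude $f \in F(\sqrt{Q^{(0)}})$ for a \emph{fixed} quadratic $Q$ independent of $H$ and of $m$. The idea: use two consecutive odd values $m$ and $m+2$ (both in the "large odd" range). Then $f^{m+2}/f^m = f^2 = (P_{m+2} + \sqrt{Q_{m+2}})/(P_m + \sqrt{Q_m})$. Rationalizing the denominator by multiplying by the conjugate $P_m - \sqrt{Q_m}$ and using $P_m^2 - Q_m = (P_m^2 - Q_m)$ a polynomial (call it $D_m$), we get $f^2 \cdot D_m = (P_{m+2} + \sqrt{Q_{m+2}})(P_m - \sqrt{Q_m})$. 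Both sides, when the $\sqrt{\ }$ terms are separated, must match: the "rational part" gives $f^2 D_m = P_{m+2}P_m - \sqrt{Q_{m+2}Q_m}$ — so I need $Q_{m+2} Q_m$ to be a perfect square up to the square-free part, forcing $Q_{m+2}^{(0)} = Q_m^{(0)} =: Q^{(0)}$ (the square-free parts stabilize). Once the square-free part $Q^{(0)}$ is the same for all large odd $m$, we have $f^2 = (P_{m+2}P_m - R_{m+2}R_m Q^{(0)})/D_m + (\text{coefficient})\sqrt{Q^{(0)}}/D_m \in F(\sqrt{Q^{(0)}})$, so $f$ itself — being positive with $f^2 \in F(\sqrt{Q^{(0)}})$ and $f$ the positive square root — lies in the integral closure; degree considerations (since $f$ has degree $1$) then force $f = P + \sqrt Q$ with $\deg P = 1$ and $\deg Q = 2$, and $Q$ is positive because $Q = (f-P)^2 \ge 0$ and is not identically a square by the non-degeneracy. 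Patching over all $2$-planes $H$: the linear polynomial $P$ and quadratic $Q$ defined plane-by-plane agree on overlaps, hence glue to global polynomials on $\R^n$ of the stated degrees.

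\textbf{Main obstacle.} The delicate point is Step 3: controlling that the square-free kernel $Q^{(0)}$ of $Q_m$ genuinely stabilizes as $m$ varies and does not depend on the plane $H$, and then justifying that the positive degree-$1$ function $f$ with $f^2$ in a fixed quadratic extension must have the form $P + \sqrt Q$ with $P$ linear — i.e., that no "higher-degree" algebraic behavior can sneak in. This requires a careful bookkeeping of degrees (using homogeneity: $f$ degree $1$ $\Rightarrow$ $f^m$ degree $m$ $\Rightarrow$ $\deg P_m = m$, $\deg Q_m = 2m$) and an application of Lemma \ref{quadratic} to rule out the alternative representations. Once $f = P + \sqrt Q$ globally with $P$ linear and $Q$ quadratic positive, the theorem is proved.
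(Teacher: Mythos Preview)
Your overall architecture is right --- show that the square-free part of $Q_m$ stabilises to a single $D$, conclude $f$ lies in the quadratic extension $F(\sqrt D)$, then use integrality and homogeneity --- but Step~3 as written has a genuine gap, and the detour through $2$-planes is unnecessary and creates a patching problem you do not resolve.

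First, you assume $Q_m$ is not a perfect square in $A$, citing a ``hypothesis implicit in the source''. There is no such hypothesis: it must be \emph{proved}, and this is where $n\ge 2$ is used. If $Q_m=S_m^2$ then $f^m=P_m\pm S_m$; since $n\ge 2$ the polynomial $P_m-S_m$ has a nontrivial zero $x$, and after a sign choice $f(x)^m=0$, contradicting positivity.

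Second, your stabilisation argument via $f^2=f^{m+2}/f^m$ does not work as stated. At that stage you only know $f^m\in F(\sqrt{Q_m^{(0)}})$ and $f^{m+2}\in F(\sqrt{Q_{m+2}^{(0)}})$; the quotient lives a priori in the biquadratic extension $F(\sqrt{Q_m^{(0)}},\sqrt{Q_{m+2}^{(0)}})$, and the fact that it equals the real function $f^2$ does not by itself force the two square-free kernels to coincide. The paper's device is cleaner: compute $f^{mm'}$ two ways, once as $P_{mm'}+\sqrt{Q_{mm'}}$ and once as $(P_m+\sqrt{Q_m})^{m'}=K_m+L_m\sqrt{Q_m}$, and apply Lemma~\ref{quadratic} to get $Q_{mm'}=L_m^2 Q_m$. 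This immediately gives that all the $Q_m$ differ by squares in $F$, hence share a common square-free part $D$.

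Third, and most seriously: even granting $f^2\in F(\sqrt D)$, you cannot conclude $f\in F(\sqrt D)$. A field element whose square lies in a quadratic extension may well generate a degree-$4$ extension. What saves the day is that the exponents $m$ are \emph{odd}: once all $f^m$ for large odd $m$ lie in the field $F(\sqrt D)$, two such exponents are coprime, so $f=(f^m)^a(f^{m'})^b\in F(\sqrt D)$ for suitable integers $a,b$. You never invoke this, and your route through $f^2$ bypasses exactly the parity information that makes the argument work.

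Finally, the restriction to $2$-planes is not needed --- $A=\mathbb R[x_1,\dots,x_n]$ is already factorial, and the paper works globally throughout. Your patching claim (``agree on overlaps, hence glue'') is not justified: the plane-by-plane $P_H$, $Q_H$ are determined only up to the ambiguity in Lemma~\ref{quadratic}, which requires $Q_H$ not to be a square on $H$, and this can fail on special planes. Working globally avoids the issue entirely, and the integrality step ($f,\bar f\in B$, hence $P=\tfrac12(f+\bar f)$ and $Q=\tfrac14(f-\bar f)^2$ lie in $F\cap B=A$) together with homogeneity plus Lemma~\ref{quadratic} then give $\deg P=1$, $\deg Q=2$ directly.
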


\begin{proof}
We claim that all polynomials $Q_m$ are not squares in $A$ and $F$.
For if $Q_m=S_m^2$ for some odd $m$ and a polynomial $S_m$,
we have $f(x)^m = P_m (x) + |S_m(x)|$. As $n\geq 2$, there is a nontrivial zero $x$ of the
polynomial $P_m-S_m$. Changing sign if necessary, we may assume that $P_m(x)=S_m(x)\leq 0$.
Then $f(x)^m = P_m (x) - S_m(x)=0$.
But $f$ is positive, a contradiction. The claim is proved.

Let $m$ and $m'$ be two large odd integers. We have
\[
P_{mm'}  + \sqrt{ Q_{mm'}}=f^{mm'}=\left(P_m  + \sqrt{ Q_m}\right)^{m'}=K_m+L_m\sqrt{ Q_m}
\]
for some polynomials $K_m$ and $L_m$. By Lemma \ref{quadratic}, $Q_{mm'}=L_m^2 Q_m$. Similarly, $Q_{mm'}=L_{m'}^2 Q_{m'}$
for a polynomial $L_{m'}$. It follows that the ratio $Q_m/Q_{m'}$ is a square in
$F$ for all large odd $m$ and $m'$.
Moreover, there is a polynomial $D$ such that $Q_m=S_m^2 D$ for a polynomial $S_m$
for all large odd $m$. In particular, $D$ is not a square in $A$ and $F$.

Consider the ring $F(\sqrt{D})$ of all functions of the form $A+B\sqrt{D}$, where $A,B\in F$.
As $D$ is not a square in $F$, the ring $F(\sqrt{D})$ is a quadratic field extension of $F$ with $F$-basis $\{1,\sqrt{D}\}$.
Note that $f^m\in F(\sqrt{D})$ for all large odd $m$. It follows that $f\in F(\sqrt{D})$.

Write
\[
f=P + S\sqrt{D}
\]
for some rational functions $P,S\in F$.

Let $B$ be the set of all functions $s\in F(\sqrt{D})$ that are
integral over $A$, i.e., $s$ is a root of a monic
polynomial in one variable with coefficients in $A$.
By \cite[Chapter VII, Proposition 1.4]{Lang}, $B$ is a subring of $F(\sqrt{D})$ containing $A$. Since $\sqrt{D}\in B$, we have
\[
f^m=P_m  + \sqrt{ Q_m}=P_m  + S_m\sqrt{D}\in B
\]
for an odd $m$. It follows that $f\in B$.

Similarly, $\bar f:=P - S\sqrt{D}\in B$ since
\[
{\bar f}^m=(P - S\sqrt{D})^m=P_m  - S_m\sqrt{D}\in B.
\]
Therefore, the functions
$P=\frac{1}{2}(f+\bar f)$ and $S^2 D=\frac{1}{4}(f-\bar f)^2$ belong to $B$.
Since $A$ is a factorial ring, all rational functions in $F$ that are integral over $A$ belong to $A$,
i.e., $F\cap B=A$ by \cite[Chapter VII, Proposition 1.7]{Lang}. Thus,
$f=P+\sqrt{Q}$, where $P$ and $Q:=S^2 D$ are polynomials.

As $f$ is a positive homogeneous function of degree $1$, for every positive real number $a$,
\[
P(x)+\sqrt{Q(x)}=f(x)=af(a^{-1} x)=aP(a^{-1} x)+\sqrt{a^2Q(a^{-1} x)}
\]
for all $x$. By Lemma \ref{quadratic}, $P(x)=aP(a^{-1} x)$ and $Q(x)=a^2Q(a^{-1} x)$.
It follows that the polynomial $P$ is linear and $Q$ is quadratic.
\end{proof}


\begin{theorem} Let $n$ be an odd positive integer.
Let $K$ be an infinitely smooth convex body in
$\R^n,$ containing the origin as its interior point,
and suppose there exists a  number  $N\ge n$ such that for  every $\xi\in S^{n-1}$ and every $m\ge N$ we have
$$A_{K,\xi}^{(m)}(0)=0.$$
Then   $K$ is an ellipsoid.

\end{theorem}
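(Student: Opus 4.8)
The plan is to feed the hypothesis on high-order derivatives of the parallel section function into formulas (\ref{A^(k-even)}) and (\ref{A^(k-odd)}), and then run a Fourier-transform argument to show that $\|x\|_K^{-n+1+m}\pm\|-x\|_K^{-n+1+m}$ is a polynomial for every $m\ge N$. Concretely, for each even $m\ge N$ with $m\ne n-1$, the vanishing of $A_{K,\xi}^{(m)}(0)$ together with (\ref{A^(k-even)}) shows that $(\|x\|_K^{-n+1+m}+\|-x\|_K^{-n+1+m})^\wedge$ vanishes on $S^{n-1}$; since this Fourier transform is a homogeneous function of degree $-n-(-n+1+m)=-1-m$ (a genuine continuous function away from the origin, not just a distribution, because $K$ is smooth), it vanishes identically off the origin, so $\|x\|_K^{-n+1+m}+\|-x\|_K^{-n+1+m}$ must be a polynomial, necessarily of degree $-n+1+m$. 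Likewise, for each odd $m\ge N$ with $m\ne n-1$, formula (\ref{A^(k-odd)}) gives that $\|x\|_K^{-n+1+m}-\|-x\|_K^{-n+1+m}$ is a polynomial of degree $-n+1+m$. Since $n$ is odd, $-n+1+m$ is even exactly when $m$ is even, which matches the parity of the even part $\|x\|_K^{p}+\|-x\|_K^p$, and similarly for the odd part; so there is no parity obstruction as there was in even dimensions.

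Next I would package these two families of polynomial identities into a single statement about $f(x):=\|x\|_K$, which is a positive homogeneous function of degree $1$ on $\R^n\setminus\{0\}$. Fix a large odd integer $m_0$ so that $p:=-n+1+m_0$ is a large even positive integer and both $m_0$ and $m_0+1$ exceed $N$. Writing $g(x)=\|x\|_K$ and $g_-(x)=\|-x\|_K$, we have $g^p+g_-^p=Q_1(x)$ and $g^{p+1}-g_-^{p+1}=R_1(x)$ for polynomials $Q_1,R_1$; moreover, by choosing various admissible $m$ we get $g^{p'}\pm g_-^{p'}$ polynomial for a whole range of exponents $p'$, and in particular we can produce polynomials expressing $g(x)^M$ in the form $P_M(x)+\sqrt{Q_M(x)}$ for large odd $M$. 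Indeed, from the even identity $g^p+g_-^p=Q_1$ and the odd identity $g^{p}-g_-^{p}$ (taking $m=p+n-1$, which for suitable $p$ is odd) we can solve the quadratic system: $g^p$ is a root of $2X^2-2Q_1 X+(Q_1^2-R_1')=0$ where $R_1'=(g^p-g_-^p)$, giving $g^p=\tfrac12(Q_1+\sqrt{2(\text{something})-Q_1^2})$, i.e. $g(x)^p$ has exactly the shape $P+\sqrt{Q}$ required by Theorem \ref{f^m}. After passing to an odd power $m=p\cdot(\text{odd})$ if needed to make the exponent odd as demanded in the hypothesis of Theorem \ref{f^m}, I apply that theorem to conclude $f(x)=\|x\|_K=P(x)+\sqrt{Q(x)}$ with $P$ linear and $Q$ a positive quadratic polynomial.

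Finally, to deduce that $K$ is an ellipsoid from $\|x\|_K=P(x)+\sqrt{Q(x)}$, I mimic the end of the $N=n$ Proposition. On $\partial K$ we have $\|x\|_K=1$, so $1-P(x)=\sqrt{Q(x)}$, and squaring yields $1-2P(x)+P(x)^2=Q(x)$, an algebraic equation of degree $2$; thus $\partial K$ lies on a quadric surface, and since $\partial K$ is a compact (complete, bounded) smooth hypersurface enclosing the origin, that quadric must be an ellipsoid, hence $K$ is an ellipsoid. Alternatively, one can observe directly that $2\|x\|_K-P(x)=\sqrt{Q(x)}$ gives $(2\|x\|_K-P(x))^2=Q(x)$, so $4\|x\|_K^2-4P(x)\|x\|_K+P(x)^2-Q(x)=0$; evaluating the two branches on rays through the origin and using positivity and homogeneity identifies the positive branch as the Minkowski functional of an ellipsoid, and one may also invoke Lemma \ref{lem:product} applied to suitable even powers to reach the Jordan--von Neumann parallelogram characterization. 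The main obstacle I anticipate is the bookkeeping needed to arrange that the exponent fed to Theorem \ref{f^m} is genuinely \emph{odd} and \emph{large} while simultaneously having the $P+\sqrt Q$ form: one must juggle the parity of $-n+1+m$ against the parity of $m$, handle the excluded value $m=n-1$ separately (which is harmless since $N\ge n>n-1$), and verify that the polynomials $Q_M$ produced are genuinely positive and of the correct degree $2M$ so that the hypotheses of Theorem \ref{f^m} are literally met; everything else is a routine assembly of the preliminaries.
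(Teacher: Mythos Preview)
Your first paragraph (the Fourier step producing polynomial identities for $\|x\|_K^{-n+1+m}\pm\|-x\|_K^{-n+1+m}$) and your last paragraph (reading off the ellipsoid from $\|x\|_K=P+\sqrt{Q}$) are fine and match the paper. The gap is in the middle paragraph, and it is not just bookkeeping.

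For a fixed exponent $p=-n+1+m$ the parity of $m$ is determined: since $n$ is odd, $m$ and $p$ have the same parity. Thus for even $p$ you get only the ``$+$'' identity $g^p+g_-^p\in A$, and for odd $p$ only the ``$-$'' identity $g^p-g_-^p\in A$. You never obtain both $g^p+g_-^p$ and $g^p-g_-^p$ polynomial for the \emph{same} $p$, so your proposed quadratic $2X^2-2Q_1X+(Q_1^2-R_1')=0$ with $R_1'=g^p-g_-^p$ is not available; there is no way to solve for $g^p$ directly from a sum and a difference at a single exponent.

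The paper supplies exactly the missing ingredient, and it is Lemma~\ref{lem:product}, which you relegated to an afterthought. From the four identities $g^{2k}+g_-^{2k}=Q_1$, $g^{2k+1}-g_-^{2k+1}=Q_2$, $g^{4k}+g_-^{4k}=Q_3$, $g^{4k+2}+g_-^{4k+2}=Q_4$ (each using the correct parity), one squares the first two and subtracts the last two to obtain $(gg_-)^{2k}$ and $(gg_-)^{2k+1}$ polynomial. Lemma~\ref{lem:product} applied to the symmetric body $L$ with $\|x\|_L^2=gg_-$ then forces $gg_-=B$ for a positive quadratic form $B$. Only now can one eliminate $g_-$: from $g^{2k+1}-g_-^{2k+1}=T_k$ and $g_-^{2k+1}=B^{2k+1}/g^{2k+1}$ one gets the genuine quadratic $(g^{2k+1})^2-T_k\,g^{2k+1}-B^{2k+1}=0$, whence $g^{2k+1}=\tfrac12\bigl(T_k+\sqrt{T_k^2+4B^{2k+1}}\bigr)$ for all large $k$, with $T_k^2+4B^{2k+1}>0$. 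This is the input Theorem~\ref{f^m} requires. Without the product step you have no second equation relating $g^{2k+1}$ and $g_-^{2k+1}$, and the argument does not close.
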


\begin{proof}

By formula (\ref{A^(k-even)}), the Fourier transform of $\|x\|_K^{-n+m+1}+\| - x\|_K^{-n+m+1}$, for even $m\ge N$, is zero outside of the origin. The part supported in the origin is a derivative of the delta function, so $\|x\|_K^{-n+m+1}+\| - x\|_K^{-n+m+1}$ is a polynomial. Similarly, if $m\ge N$ is odd, then (\ref{A^(k-even)}) implies that $\|x\|_K^{-n+m+1} - \| - x\|_K^{-n+m+1}$  is also a polynomial.
Thus for any $k\ge (N-n+1)/2$ we have
$$\|x\|_K^{2k} + \|-x\|_K^{2k} = Q_1(x),$$
$$\|x\|_K^{2k+1} - \|-x\|_K^{2k+1} = Q_2(x),$$
$$\|x\|_K^{4k} + \|-x\|_K^{4k} = Q_3(x),$$
$$\|x\|_K^{4k+2} + \|-x\|_K^{4k+2} = Q_4(x),$$
where $Q_1$, $Q_2$, $Q_3$, $Q_4$ are homogeneous polynomials of degrees $2k$, $2k+1$, $4k$, $4k+2$ correspondingly.

Squaring the first two of these equalities and subtracting the second two equalities, we get
$$(\|x\|_K  \|-x\|_K) ^{2k} = P_1(x),$$
$$(\|x\|_K   \|-x\|_K) ^{2k+1} = P_2(x),$$
for some homogeneous polynomials $P_1$ and $P_2$.

Defining a star body $L$ by the formula $\|x\|_L = \sqrt{\|x\|_K   \|-x\|_K}$ and appying Lemma \ref{lem:product}, we obtain that  $L$ is an ellipsoid. Thus, there is a positive  quadratic polynomial $B$ such that
\begin{equation}\label{B}\|x\|_K   \|-x\|_K = B(x), \quad \forall x\in \R^n.
\end{equation}

Furthermore,  equation (\ref{B}) together with
$$\|x\|_K^{2k+1} - \|-x\|_K^{2k+1} = T_k(x),$$
 for every   $k\ge (N-n)/2$ and for some odd polynomials  $T_k$, yield a quadratic equation that gives
$$ \|x\|_K^{2k+1} = \frac12\left(T_k+\sqrt{T_k^2(x)+4 [B(x)]^{2k+1}}\right).$$

Using Theorem \ref{f^m}, we conclude that the Minkowski functional of $K$ is of the form
$$ \|x\|_K = P(x) + \sqrt{Q(x)},$$
where $P$ is a polynomial of degree 1, and $Q$ is a positive polynomial of degree 2. Thus, $K$ is an ellipsoid.

\end{proof}

\bigbreak

{\bf Acknowledgements.} The first named author was supported in part by the NSF grant
DMS-1265155. The second author has been supported by the NSF grant DMS-1160206. The third
author was supported in part by NSERC. We would like to thank Mark Agranovsky and Mikhail Zaidenberg for
bringing the problem to our attention.

\bibliographystyle{amsplain}

\begin{thebibliography}{100}
\footnotesize
\bibitem{A}{\rm M.\ Agranovsky}, {\sl On polynomially integrable domains
in Euclidean spaces}, Preprint, \url{https://arxiv.org/pdf/1701.05551v1.pdf}

\bibitem{Arnold} {\rm V.~I.~Arnold}, {\sl Arnold's problems}, 2nd
  edition,   Springer-Verlag, Berlin (2004).

\bibitem{BFM} {\rm F.~Barthe, M.~Fradelizi and B.~Maurey}, {\sl A short solution to the
Busemann-Petty problem}, Positivity {\bf 3} (1999), 95-100.


\bibitem{GKS} {\rm R.~J.~Gardner, A.~Koldobsky and T.~Schlumprecht},
{\sl An analytic solution to the Busemann-Petty problem on sections of convex bodies},
Annals of Math. {\bf 149} (1999), 691-703.

\bibitem{GYY}
{\rm P.~Goodey, V.~Yaskin, and M.~Yaskina}, {\sl Fourier transforms and the Funk-Hecke theorem in convex geometry}, {J. London Math. Soc.} (2) {\bf 80} (2009), 388-404.

\bibitem{JN} {\rm P.~Jordan and J.~von Neumann}, {\sl On inner products in linear metric spaces}, Annals
of Math. {\bf 36} (1935), 719-723.

\bibitem{KKYY} {\rm N.~J.~Kalton, A.~Koldobsky, V.~Yaskin, and M.~Yaskina}, {\sl The geometry of $L_0$}, Canadian J. Math. {\bf 59} no. 5 (2007), 1029--1049.

\bibitem{K-book} {\rm A.\ Koldobsky}, {\sl Fourier analysis in convex geometry}, Mathematical Surveys and Monographs {\bf 116}, Amer. Math. Society (2005).


\bibitem{Lang}
{\rm S.~Lang}, {\sl Algebra}, third ed., Graduate Texts in Mathematics, {\bf 211},
  Springer-Verlag, New York, 2002.

\bibitem{N} {\rm I.~Newton},
{\sl Philosophiae  Naturalis  Principia  Mathematica},   London,
1687.


\bibitem{RY} {\rm D.~Ryabogin and V.~Yaskin}, {\sl Detecting symmetry in star bodies,}
J. Math. Anal. Appl. {\bf 395} (2012), 509-514.


\bibitem{V} {\rm V.\ A.\ Vassiliev}, {\sl Newton's lemma XXVIII on integrable ovals in higher
dimensions and reflection groups}, Bull. Lond. Math. Soc. {\bf 47} (2015),
290--300.
\end{thebibliography}

\bigskip

\bigskip

\noindent \textsc{Alexander \ Koldobsky}: Department of
Mathematics, University of Missouri, Columbia, MO 65211, USA

\smallskip

\noindent \textit{E-mail:} \texttt{koldobskiya@missouri.edu}

\bigskip

\noindent \textsc{Alexander S. Merkurjev}: Department of Mathematics, University of California, Los Angeles,
CA 90095-1555, USA

\smallskip

\noindent \textit{E-mail:} \texttt{merkurev@math.ucla.edu}

\bigskip

\noindent \textsc{Vladyslav \ Yaskin}: Department of Mathematical and Statistical Sciences, University of Alberta, Edmonton, Alberta T6G 2G1, Canada

\smallskip

\noindent \textit{E-mail:} \texttt{yaskin@ualberta.ca}

\end{document}